\numberwithin{equation}{section}
\numberwithin{figure}{section}
\theoremstyle{plain}
\newtheorem{thm}{\protect\theoremname}
  \theoremstyle{definition}
  \newtheorem{defn}[thm]{\protect\definitionname}
  \theoremstyle{definition}
  \newtheorem{example}[thm]{\protect\examplename}
  \theoremstyle{plain}
  \newtheorem{prop}[thm]{\protect\propositionname}
  \theoremstyle{plain}
  \newtheorem{lem}[thm]{\protect\lemmaname}
\newcommand{\R}{\mathbb{R}}
\newcommand{\C}{\mathbb{C}}
\newcommand{\cH}{\mathcal{H}}
\newcommand{\cD}{\mathcal{D}}
  \providecommand{\definitionname}{Definition}
  \providecommand{\examplename}{Example}
  \providecommand{\lemmaname}{Lemma}
  \providecommand{\propositionname}{Proposition}
\providecommand{\theoremname}{Theorem}
\begin{document}
\title{Geometric States\\{\it In memory of Steve Zelditch}}

\author{Benoit Dherin} 
\address{Google, Mountain View, CA 94043, USA}
\email{dherin@google.com}
\author{Alan Weinstein}
\address{Department of Mathematics, University of California, Berkeley, CA 94720, USA, and Department of Mathematics, Stanford University, Stanford,  CA 94305, USA}
\email{alanw@math.berkeley.edu}

\subjclass[2020]{46C50, 46F10, 53C99, 81S10}

\maketitle
\tableofcontents{}

\date{January 2024}

\maketitle

\begin{abstract}
We introduce a special family of distributional $\alpha$-densities
and give a trans\-versality criterion stating when their product is
defined, closely related to H\"ormander's criterion for general distributions.
Moreover, we show that for the subspace of distributional half-densities
in this family the distribution product naturally yields a pairing
that extends the usual one on smooth half-densities.
\end{abstract}

\section{Introduction}

The notion of state in quantum mechanics is generally defined as a unit vector, or ray, in a Hilbert space \cite{isham2001}, and the probability of transition from one state $\psi_1$  to another state $\psi_2$  is given using the  Hilbert space inner product as $|\langle \psi_1,\,  \psi_2 \rangle|^2$. When these states are realized as the solutions of the Schr\"odinger equation a number of technical problems arise with the Hilbert space formulation of the states. In particular,  since its solutions (or initial conditions) can  have singularities, they are distributions rather than honest functions in a $L^2$  Hilbert space \cite{isham2001}. The necessity to include certain distributions as states makes the axiomatic requirement of the state space to be a Hilbert space difficult to fulfill in practice. This has led to considering weaker definitions than that of a Hilbert space as natural  state spaces for quantum mechanics. For example,  partial inner product spaces \cite{antoine1976, antoine2009} are vector spaces where the inner product (and hence the transition probability) is only partially defined. This issue is closely related to the product of distributions, which is well-known not to be possible in general \cite{brouder2016,hormander2009}.   H\"ormander gave a criterion involving  properties of the distributions' wave-front sets governing when their product is possible \cite{hormander2009}. Extending the product from functions to distributions has also important implications in quantum field theory and renormalization theory; see \cite{dang2013,dang2016} for example.

In this note, we take a geometric approach to the notion of state rather than an analytic one, based on the concept of $\alpha$-densities as described in \cite{BW1997,GS1977}. For instance, the space of half densities on a manifold $X$ is a natural candidate for a quantum state space as it comes equipped with an intrinsic Hilbert space product (upon taking suitable completions) and locally it can be modeled as the space of square-integrable functions. To include singular states, we still need to consider the dual space of distributional half densities though on which the natural intrinsic inner product is unfortunately not always defined. 

The main contribution of this work is to give an explicit geometric description of a special family of distributional $\alpha$-densities, which we call {\it geometric states}, for which the product is well-defined provided that their spatial components (which we call {\it cores}) intersect transversally (which is a special case of H\"ormander's criterion \cite{hormander2009}). When specialized to $\alpha = 1/2$, this product allows us to extend the intrinsic inner-product of half densities to the whole space of geometric states, assuming transversal intersection of their cores.

\section{Geometric States}

Before giving a definition of a geometric state, let us start by recalling the notion of $\alpha$ -densities; see \cite{BW1997,GS1977} for more details.
Consider a $n$-dimensional vector space $V$ over $\mathbb C$. We denote by $V^*$ its dual, that is, the vector space of linear maps $V\rightarrow \mathbb C$, and by $F(V)$ its set of frames, that is, the set of ordered bases ${\bf e}=(e_1, \dots, e_n)$. For each $\alpha\in \mathbb C$, the space of $\alpha$-densities on $V$ is the space of mappings $\eta: F(V) \rightarrow \mathbb C$  such that  $\eta( A{\bf e}) = |\det A|^\alpha\eta ({\bf e})$, 
where $A$ is a non-singular linear map and $|\det A|$ is
the absolute value of its determinant. We denote by $|V|^\alpha$ the space of $\alpha$-densities on $V$, which is a complex vector space of dimension 1.

Now consider a smooth manifold $X$. The space of $\alpha$-densities on $X$, which we denote by $|\Omega|^\alpha(X)$, is the space of sections of the line-bundle $|TX|^\alpha \rightarrow X$ whose fiber at $x$ is $|T_x X|^\alpha$.  More generally, given a vector bundle $E\rightarrow X$, we can form the line bundle $|E|^\alpha\rightarrow X$ whose fiber at $x$ is the 1-dimensional vector space  over $\mathbb C$ of $\alpha$-densities $|E_x|^\alpha$. We consider the space of smooth sections $\Gamma(X,|E|^\alpha)$  as a $C^\infty(X)$-module. We will consider the tensor product of such modules over $C^\infty(X)$. There are a few canonical isomorphisms of importance to us that we now enumerate for convenience. First of all $|E^*|^\alpha\rightarrow X$ naturally identifies with $|E|^{-\alpha}\rightarrow X$.  Second, given an exact sequence  $0\rightarrow A\rightarrow B\rightarrow C\rightarrow 0$ of vector bundles over $X$, we have the canonical isomorphism $|B|^\alpha \simeq |A|^\alpha \otimes |C|^\alpha$,  where we use the shorthand notation $|E|^\alpha$ to denote the vector bundle $|E|^\alpha \rightarrow X$. At last, we have that $|E|^\alpha \otimes |E|^\beta\simeq |E|^{\alpha + \beta}$ which specializes to $|\Omega |^\alpha(X) \otimes |\Omega|^\beta(X) \simeq |\Omega|^{\alpha + \beta}(X)$ when $E$ is the tangent bundle to $X$. 

We can now define our space of geometric states:
\begin{defn}
We define the space $\cH_{X,\,C}^{\alpha}$, where $C$ is a smooth
submanifold of a smooth manifold $X$ and $\alpha\in\C$, as the
subspace of the following density bundle sections
\[
|\Omega|^{\alpha}(C)\otimes\Gamma(|N^{*}C|^{1-\alpha}).
\]
The first factor
of the tensor product above is the space of smooth $\alpha$-densities
on $C$, while the second factor is the space of the smooth sections
of the $(1-\alpha)$-density bundle associated to the conormal bundle
$N^{*}C\rightarrow C$ to $C$. We call an element in either of this
spaces interchangeably a \textbf{geometric state} or a \textbf{geometric
distributional $\alpha$-density}.
\end{defn}

This family of spaces above has interesting well-known extreme cases:

\begin{example}
When $C=X,$ the spaces $\cH_{X,\,C}^{\alpha}$ coincide with the usual
smooth $\alpha$-densities on the full space $X$. In particular,
when $\alpha=0$, this space identifies with the space $\mathcal{E}_{X}$
of the smooth functions on $X$, which, in turn, can be regarded as
a subspace of the distributions on $X$ (with test ``functions" taken
in the smooth compactly supported 1-densities on $X$). 
\end{example}

\begin{example}
When $\alpha=1,$ we have the identification of $\cH_{X,\,C}^{\alpha}$
with the space of smooth $1$-densities (or measures) supported on
the submanifold $C$. This space can also be regarded as a subspace
of the distributions on $X$ supported on $C$ (now with test functions
the compactly supported smooth functions on $X$). 
\end{example}

\begin{example}
When $C$ is reduced to a single point $x\in X$, then $\cH_{X,\,C}^{\alpha}$
identifies with the $(\alpha-1)$-densities $|T_{x}X|^{\alpha-1}$ at
that point, which further identifies with $\R$ when $\alpha=1$, the
inverse volume elements $1/|T_{x}X|$ when $\alpha=0$ (which will see are
related with the delta distributions), and the volume elements  $|T_{x}X|$
when $\alpha=2$. 
\end{example}

A last example comes from the following Proposition

\begin{prop}\label{prop:conormal_state}
Let $C$ be a smooth submanifold of  $X$. Consider
the conormal bundle $N^{*}C$ to $C$ in $X$. Then the restriction
of a half-density in $|\Omega|^{\frac{1}{2}}(N^{*}C)$ to the zero
section of $N^{*}C\rightarrow C$ can be identified with an element
of $|\Omega|^{\frac{1}{2}}(C)\otimes\Gamma(|N^{*}C|^{\frac{1}{2}})$, that is, with an element of $\cH_{X,\,C}^{\frac{1}{2}}$.
\end{prop}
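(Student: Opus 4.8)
The plan is to compute the restriction map fiberwise over a point $x\in C$ and check that the resulting line-bundle identification is the one asserted. Write $P = N^*C$, viewed as a manifold fibered over $C$ with projection $\pi\colon P\to C$, and let $\iota\colon C\hookrightarrow P$ denote the zero section, so that $\pi\circ\iota = \id_C$. The first step is to describe the tangent bundle of $P$ along the zero section. Along $\iota(C)$ there is a canonical short exact sequence of vector bundles over $C$,
\[
0\longrightarrow V\longrightarrow \iota^*TP\longrightarrow TC\longrightarrow 0,
\]
where $V = \ker(d\pi)$ is the vertical bundle; since $P\to C$ is a vector bundle, its vertical bundle along the zero section is canonically $P$ itself, i.e.\ $V\simeq N^*C$. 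Applying the canonical isomorphism $|B|^\alpha\simeq |A|^\alpha\otimes|C|^\alpha$ for exact sequences (recalled in the text) with $\alpha = \tfrac12$ gives, over $C$,
\[
\iota^*|TP|^{\frac12}\;\simeq\;|N^*C|^{\frac12}\otimes|TC|^{\frac12}.
\]

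The second step is to interpret the two sides as spaces of sections. A half-density $\eta\in|\Omega|^{\frac12}(P)$ is a section of $|TP|^{\frac12}\to P$; restricting it to the zero section means pulling back along $\iota$, yielding a section of $\iota^*|TP|^{\frac12}\to C$. By the isomorphism just displayed, this section is the same datum as a section of $|TC|^{\frac12}\otimes|N^*C|^{\frac12}\to C$, and the space of such sections is exactly $|\Omega|^{\frac12}(C)\otimes\Gamma(|N^*C|^{\frac12})$, where the tensor product is over $C^\infty(C)$ as in the running conventions. This is precisely $\cH_{X,\,C}^{\frac12}$, so the restriction map lands where claimed. I would also note that the construction does not use $\alpha=\tfrac12$ in any essential way except for the symmetry that makes the exponents on the two factors equal; the same argument with a general $\alpha$ restricts $|\Omega|^{\alpha}(N^*C)$ into $|\Omega|^{\alpha}(C)\otimes\Gamma(|N^*C|^{\alpha})$, and one lands in $\cH_{X,\,C}^{\alpha}$ only after the further identification $|N^*C|^{\alpha}\simeq|N^*C|^{1-\alpha}$ coming from a chosen density on $N^*C$ or from the value $\alpha=\tfrac12$.

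The main obstacle is making the identification \emph{canonical} rather than merely existent: one must check that the isomorphism $V\simeq N^*C$ of the vertical bundle along the zero section with $N^*C$ itself is the obvious one and is independent of choices, and likewise that the splitting-free isomorphism $|B|^{\frac12}\simeq|A|^{\frac12}\otimes|C|^{\frac12}$ is used correctly (it is canonical, but the induced identification of $\iota^*|TP|^{\frac12}$ depends on naming $A$ and $C$ in the right order, which fixes the order of the tensor factors to match the definition of $\cH_{X,\,C}^{\frac12}$). A clean way to see both points at once is to work in an adapted local trivialization: choose local coordinates $(q^1,\dots,q^k)$ on $C$ and complete the conormal fiber coordinates $(p_1,\dots,p_{n-k})$, so that $(q,p)$ are coordinates on $P$ near $\iota(C)$; then $\partial_{q^1}\wedge\cdots\wedge\partial_{q^k}\wedge\partial_{p_1}\wedge\cdots\wedge\partial_{p_{n-k}}$ trivializes $|TP|$ along $p=0$, the $\partial_q$ part trivializes $|TC|$, and the $\partial_p$ part trivializes $|V|=|N^*C|$; one then verifies that a change of adapted coordinates transforms these three trivializations compatibly, i.e.\ the Jacobian factorizes in block-triangular form along the zero section so that $|\det|$ multiplies correctly. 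Granting this, the restriction $\eta\mapsto \eta\circ\iota$ is well defined and takes values in $\cH_{X,\,C}^{\frac12}$, which is the assertion.
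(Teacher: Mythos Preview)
Your argument is correct and follows essentially the same route as the paper: both proofs rest on identifying $\iota^*T(N^*C)$ with $TC\oplus N^*C$ along the zero section and then applying the canonical density isomorphism for short exact sequences (or direct sums). Your version is simply more explicit about the exact sequence, the canonicity of the vertical identification $V\simeq N^*C$, and the local coordinate check, whereas the paper records the same idea in a single sentence.
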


\begin{proof}
This comes directly from the fact that the tangent space to $N^{*}C$
restricted to the zero section can be identified with the bundle $TC\oplus N^{*}C$
and the usual canonical isomorphism for $\alpha$-density bundles.
\end{proof}
The proposition above motivates the following example:

\begin{example}
Half-densities on lagrangian submanifolds, such as the conormal bundle $N^{*}C$ to a submanifold $C$, are typically related to some form of ``geometric quantum states'' (see \cite{BW1997} for example). As shown in Proposition \ref{prop:conormal_state}, a half-density on $N^*C$ induces a element of $\cH_{X,\,C}^{\frac{1}{2}}$. This space can be regarded as a subspace of the distributional half-densities
on $X$ thanks to Proposition \ref{prop:pairing}, which makes it clear that
the pairing of an element of $\cH_{X,C}^{\frac{1}{2}}$ with a compactly
supported half density in $|\Omega_{0}|^{\frac{1}{2}}(X)$ yields
a compactly supported $1$-density on $C$ that can be integrated
over $C$ into a complex number. This yields the natural pairing:
\[
\langle\cdot\,,\cdot\rangle\::\;\cH_{X,\,C}^{\frac{1}{2}}\times|\Omega_{0}|^{\frac{1}{2}}(X)\longrightarrow\C.
\]
The examples above suggest the name of ``geometric states'' for the
spaces $\cH_{X,C}^{\alpha}$, since the $1$-density example is related
to classical state spaces, while the half-density example is related
to the quantum state spaces, both of which are given here in term
of geometrical data. 
\end{example}

\section{Geometric states as distributions}
In this section,
we show
that elements in the geometric state space $\cH_{X,\,C}^{1-\alpha}$
naturally identify with distributional 
$(1-\alpha)$-densities, that is,
elements in the dual to the space of the compactly supported smooth
$\alpha$-densities on $X$.
In the following, we will denote the
spaces of compactly supported smooth $\alpha$-densities on $X$ by
\[
\cD_{X}^{\alpha}:=|\Omega_{0}|^{\alpha}(X).
\]
Note that $\cD_{X}^{0}$ identifies with the space $\cD_{X}$ of compactly
supported functions (test functions) on $X$. 
\begin{lem}
\label{prop:restriction}Let $C$ be a smooth submanifold of a smooth
manifold $X$. The restriction to $C$ of the $\alpha$-densities
on $X$ yields the following canonical map
\begin{equation}
|\Omega|^{\alpha}(X)\longrightarrow|\Omega|^{\alpha}(C)\otimes\Gamma(|NC|^{\alpha})
\end{equation}
\end{lem}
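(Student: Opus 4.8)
The plan is to construct the restriction map fiberwise and check that the pointwise construction assembles into a smooth bundle map, which then induces the claimed map on sections. Fix $x\in C$. At such a point we have the inclusion $T_xC\hookrightarrow T_xX$ and the resulting short exact sequence of vector spaces
\[
0\longrightarrow T_xC\longrightarrow T_xX\longrightarrow N_xC\longrightarrow 0,
\]
where $N_xC=T_xX/T_xC$ is the fiber of the normal bundle. Applying the canonical isomorphism for $\alpha$-density lines associated to a short exact sequence (recalled in the preliminaries), we get $|T_xX|^\alpha\simeq |T_xC|^\alpha\otimes |N_xC|^\alpha$. Hence an element of $|T_xX|^\alpha$, namely the value at $x$ of a density $\eta\in|\Omega|^\alpha(X)$, is canonically identified with an element of $|T_xC|^\alpha\otimes|N_xC|^\alpha$, i.e. with an element of the fiber at $x$ of the bundle $|TC|^\alpha\otimes|NC|^\alpha$ over $C$. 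Sending $\eta$ to the section $x\mapsto [\eta(x)]$ defined this way is the desired map $|\Omega|^\alpha(X)\to|\Omega|^\alpha(C)\otimes\Gamma(|NC|^\alpha)$; linearity over $\C$ (indeed over $C^\infty(C)$, after restricting functions from $X$ to $C$) is immediate from naturality of the isomorphism.

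The steps, in order: (1) recall that restricting a section of a vector bundle over $X$ to the submanifold $C$ is well defined and smooth, so $\eta|_C$ is a smooth section over $C$ of $|TX|^\alpha|_C$; (2) observe that $|TX|^\alpha|_C$ is canonically the bundle $|TX|_C|^\alpha$, the $\alpha$-density bundle of the restricted vector bundle $TX|_C\to C$; (3) invoke the short exact sequence $0\to TC\to TX|_C\to NC\to 0$ of vector bundles over $C$ and the induced canonical bundle isomorphism $|TX|_C|^\alpha\simeq |TC|^\alpha\otimes|NC|^\alpha$; (4) compose to obtain the map on global sections, and note it is $C^\infty(C)$-linear and natural with respect to restriction, hence ``canonical'' in the sense claimed. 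One should also record that the construction is local, so it suffices to verify smoothness in an adapted coordinate chart $(x^1,\dots,x^k,y^1,\dots,y^{n-k})$ where $C=\{y=0\}$: there the map sends $f(x,y)\,|dx\wedge dy|^\alpha$ to $f(x,0)\,|dx|^\alpha\otimes|dy|^\alpha$ under the obvious local trivialization of $NC$ by $\partial/\partial y^j$, which is manifestly smooth.

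I do not expect a genuine obstacle here; the content is entirely the bookkeeping of the canonical density-line isomorphisms already catalogued in the excerpt. The only point that deserves care is the identification in step (3): the isomorphism $|B|^\alpha\simeq|A|^\alpha\otimes|C|^\alpha$ attached to $0\to A\to B\to C\to 0$ depends only on the exact sequence and not on a splitting, so the resulting bundle map is well defined globally over $C$ even though $TX|_C$ need not split smoothly as $TC\oplus NC$; invoking this naturality is what makes the map ``canonical''. A secondary subtlety worth a sentence is orientation/absolute-value conventions — because we work with $|\det A|^\alpha$ rather than $(\det A)^\alpha$, the isomorphisms are honest (no sign ambiguity), so no hypotheses on orientability of $C$, $X$, or $NC$ are needed. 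With these remarks in place the proof is a short paragraph.
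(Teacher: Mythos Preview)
Your proof is correct and follows the same approach as the paper: both restrict a density to $C$ and then invoke the canonical isomorphism $|TX|_C|^\alpha\simeq|TC|^\alpha\otimes|NC|^\alpha$ coming from the short exact sequence $0\to TC\to TX|_C\to NC\to 0$. The paper's argument is a single sentence, while you spell out the fiberwise construction, local smoothness, and the splitting-independence of the density isomorphism, but the underlying idea is identical.
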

\begin{proof}
The proof follows immediately from the exact sequence $0\rightarrow TC\rightarrow TX\rightarrow NC\rightarrow0$
and the canonical isomorphism $|A\oplus B|^{\alpha}\simeq|A|^{\alpha}\otimes|B|^{\alpha}$.\end{proof}
\begin{prop}\label{prop:pairing}
There is a natural bilinear pairing
\[
\langle\cdot,\cdot\rangle:\;\cH_{X,C}^{1-\alpha}\times\cD_{X}^{\alpha}\longrightarrow\C,
\]
turning $\cH_{X,C}^{1-\alpha}$ into a subset of the distributional
$(1-\alpha)$-densities $(\cD_{X}^{\alpha})^{'}$ on $X$. \end{prop}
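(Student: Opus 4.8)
The plan is to define the pairing $\langle\Psi,\mu\rangle$ of $\Psi\in\cH_{X,C}^{1-\alpha}$ with $\mu\in\cD_X^\alpha=|\Omega_0|^\alpha(X)$ by restricting $\mu$ to $C$, contracting it against $\Psi$, and integrating the resulting compactly supported $1$-density over $C$. Concretely: by Definition, $\Psi$ is a section of $|\Omega|^{1-\alpha}(C)\otimes\Gamma(|N^{*}C|^{\alpha})$; by Lemma~\ref{prop:restriction} the restriction $\mu|_C$ is a section of $|\Omega|^{\alpha}(C)\otimes\Gamma(|NC|^{\alpha})$, which is compactly supported in $C$ since $\mu$ is compactly supported in $X$. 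Forming the tensor product $\Psi\otimes(\mu|_C)$ and regrouping the factors, I use two of the canonical isomorphisms recorded earlier: first $|N^{*}C|^{\alpha}\simeq|NC|^{-\alpha}$, which turns the conormal and normal density factors into a section of $|NC|^{-\alpha}\otimes|NC|^{\alpha}\simeq|NC|^{0}$, i.e. an ordinary function on $C$; and second $|\Omega|^{1-\alpha}(C)\otimes|\Omega|^{\alpha}(C)\simeq|\Omega|^{1}(C)$. The outcome is a compactly supported smooth $1$-density on $C$, and we set $\langle\Psi,\mu\rangle:=\int_C\Psi\cdot(\mu|_C)\in\C$.

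Bilinearity is then automatic, since restriction to $C$, the tensor product, the canonical isomorphisms, and integration over $C$ are each $\C$-linear in every argument. To conclude that this pairing realizes $\cH_{X,C}^{1-\alpha}$ as a subset of the distributional $(1-\alpha)$-densities $(\cD_X^\alpha)'$, I would check two things. First, for fixed $\Psi$ the functional $\mu\mapsto\langle\Psi,\mu\rangle$ is continuous for the usual inductive-limit topology on $\cD_X^\alpha$: restriction to $C$ is continuous into the space of compactly supported sections of $|\Omega|^{\alpha}(C)\otimes|NC|^{\alpha}$, and integration over $C$ against the fixed smooth $\Psi$ is continuous on compactly supported $1$-densities, so the composite is continuous. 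Second, the assignment $\Psi\mapsto\langle\Psi,\cdot\rangle$ is injective, which is where the actual content lies.

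For injectivity, suppose $\Psi\neq0$; then $\Psi(c)\neq0$ at some $c\in C$, hence $\Psi$ is non-vanishing on a neighborhood $U$ of $c$ in $C$. Choosing a chart of $X$ adapted to $C$ — coordinates $(x^{1},\dots,x^{k},y^{1},\dots,y^{n-k})$ with $C=\{y=0\}$ — simultaneously trivializes $|TC|^{1-\alpha}$, $|N^{*}C|^{\alpha}$, $|TX|^{\alpha}$ and $|NC|^{\alpha}$ through the coordinate frames, so that near $c$ one writes $\Psi$ as a function that is nowhere zero on $C\cap U$ times the standard frame densities, while $\mu|_C$ is read off from the coefficient of $\mu$ evaluated at $y=0$. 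Taking $\mu$ of product form — a nonnegative bump function in $x$ supported in $U$ times a nonnegative bump function in $y$ near $0$ — makes the integrand of $\int_C\Psi\cdot(\mu|_C)$ of constant sign and not identically zero, whence $\langle\Psi,\mu\rangle\neq0$, contradicting $\langle\Psi,\cdot\rangle\equiv0$. The main obstacle, and it is a mild one, is the bookkeeping: confirming that the coordinate description of the restriction map agrees with the intrinsic map of Lemma~\ref{prop:restriction}, and that the density weights along $C$ and along the normal directions recombine exactly into a genuine $1$-density on $C$. Once that is pinned down, there is no analytic difficulty beyond the standard bump-function construction.
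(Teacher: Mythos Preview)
Your construction is exactly the paper's: restrict the test density to $C$ via Lemma~\ref{prop:restriction}, tensor with the geometric state to obtain a compactly supported $1$-density on $C$, and integrate. You go further than the paper in checking continuity and injectivity (the paper simply asserts the ``subset'' claim without proof); the only quibble is that your ``constant sign'' phrasing in the injectivity step should be replaced by a concentration/approximation argument, since the densities here are $\C$-valued.
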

\begin{proof}
Thanks to Lemma \ref{prop:restriction}, the restriction of an element
$\phi\in\cD_{X}^{\alpha}$ to the submanifold $C$ can be identified
with a compactly supported section 
\[
\phi_{|C}\in|\Omega_{0}|^{\alpha}(C)\otimes\Gamma_{0}(|NC|^{\alpha}).
\]
Now taking the tensor product of this restriction with a section
\[
\theta\in|\Omega|^{1-\alpha}(C)\otimes\Gamma(|N^{*}C|^{\alpha})
\]
 (i.e. with an element $\theta\in\cH_{X,C}^{1-\alpha}$), we obtain
a compactly supported $1$-density
\[
\theta\otimes\phi_{|C}\in|\Omega_{0}|^{1}(C)
\]
Hence, we can integrate this tensor product, yielding the following
natural pairing:
\[
\langle\theta,\,\phi\rangle:=\int_{C}\theta\otimes\phi_{|C}.
\]

\end{proof}

\section{Product of geometric distributions}

On the $\alpha$-density test spaces $\cD_{X}^{\alpha}$, that is,
the spaces of compactly supported smooth $\alpha$-densities on $X$,
we have the natural product of $\alpha$-densities
\[
\cD_{X}^{\alpha}\times\cD_{X}^{\beta}\longrightarrow\cD_{X}^{\alpha+\beta}
\]
given by the tensor product of the density bundle sections. (When $\alpha=\beta=0$,
it degenerates to the product of smooth compactly supported functions.)
The question of whether and when this product can be extended to the
distributional spaces $(\cD_{X})^{'}$ has wide-reaching implications
for both partial differential equations and quantum-field theory.
In general, it is not possible, and H\"ormander \cite{hormander2009} 
has given a criterion on
pairs of distributions which is sufficient to multiply them
together in a meaningful way. Roughly, the H\"ormander criterion states
that the product of distributions makes sense when their wave front sets
do not interact. 

Here we are considering the spaces of distributional $\alpha$-densities
(not only the case $\alpha=0$), and we specialize to the case of
the distributional $\alpha$-density spaces introduced in the previous
sections. The main result is that the product of two geometric states
in $\cH_{X,C}^{\alpha}$ and in $\cH_{X,D}^{\beta}$ is defined when
$C$ and $D$ intersect transversally along a smooth submanifold.
Let's start to see that it is true on a particular sub-family of the
geometric states

Consider the degenerate case when $C=D=X$. Here, we obviously have
a transverse intersection. Now, we have that $\cH_{X,X}^{\alpha}$
coincides with the smooth $\alpha$-densities on $X$ and $\cH_{X,D}^{\beta}$
coincides with the smooth $\beta$-densities on $X$. In this case,
we already know that the tensor product gives a well defined product
\[
\cH_{X,X}^{\alpha}\times\cH_{X,X}^{\beta}\rightarrow\cH_{X,X}^{\alpha+\beta}
\]
thanks to the canonical isomorphism
$|\Omega|^{\alpha}(X)\otimes|\Omega|^{\beta}(X)\simeq|\Omega|^{\alpha+\beta}(X).$
The following theorem states that this product can be also extended
to geometric states, provided that their supports intersect
transversally. Moreover, in the particular case when $\alpha=\beta=1/2$,
this product can be used to define a pairing, since the
product between two geometric distributional half-densities specializes to a $1$-density on the intersection, which can then be integrated, resulting into a pairing between geometric half-densities with transverse intersection.

\begin{thm}
Let $C$ and $D$ be two smooth submanifolds of a manifold $X$ with
smooth transverse intersection $C\cap D$. Then, we have the canonical
mapping 
\[
\cH_{X,\,C}^{\alpha}\otimes\cH_{X,\,D}^{\beta}\, \rightarrow\,\cH_{X,\,C\cap D}^{\alpha+\beta}
\]
defining a product on the space of geometric distributional densities.
\end{thm}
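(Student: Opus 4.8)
The plan is to reduce the assertion to a handful of canonical isomorphisms of line bundles over the intersection $E:=C\cap D$, each valid precisely because of transversality, and then to bookkeep the powers of the normal-bundle densities. First I would put the three relevant spaces into a uniform shape. Since the conormal bundle is $N^{*}C\cong (NC)^{*}$ with $NC:=TX|_{C}/TC$, the canonical isomorphism $|F^{*}|^{\gamma}\cong|F|^{-\gamma}$ gives $|N^{*}C|^{1-\alpha}\cong|NC|^{\alpha-1}$, so $\cH_{X,C}^{\alpha}$ is the space of smooth sections over $C$ of the line bundle $|TC|^{\alpha}\otimes|NC|^{\alpha-1}$, and likewise for $\cH_{X,D}^{\beta}$ and $\cH_{X,C\cap D}^{\alpha+\beta}$. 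Restricting a section over $C$ (resp.\ $D$) to $E$ is harmless, so the whole task becomes: construct a canonical isomorphism from $\big(|TC|^{\alpha}\otimes|NC|^{\alpha-1}\big)|_{E}\otimes\big(|TD|^{\beta}\otimes|ND|^{\beta-1}\big)|_{E}$ onto $|TE|^{\alpha+\beta}\otimes|NE|^{\alpha+\beta-1}$, and then define the product of $u\in\cH_{X,C}^{\alpha}$ and $v\in\cH_{X,D}^{\beta}$ as the image of $u|_{E}\otimes v|_{E}$.

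The linear-algebra input is three pointwise isomorphisms. Fix $x\in E$ and write $V=T_{x}X$, $P=T_{x}C$, $Q=T_{x}D$, $W=T_{x}E=P\cap Q$; transversality says $P+Q=V$. Then (a) $v+W\mapsto(v+P,\,v+Q)$ is an isomorphism $V/W\xrightarrow{\sim}V/P\oplus V/Q$ (kernel $P\cap Q=W$, surjective because $P+Q=V$), giving $NE\cong NC|_{E}\oplus ND|_{E}$; (b) $p+W\mapsto p+Q$ is an isomorphism $P/W\xrightarrow{\sim}V/Q$ (second isomorphism theorem together with $P+Q=V$), identifying the normal bundle $TC|_{E}/TE$ of $E$ inside $C$ with $ND|_{E}$; and (c) symmetrically, the normal bundle of $E$ inside $D$ is $NC|_{E}$. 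Each of these maps is canonical, hence globalizes to a bundle isomorphism over $E$, and it is exactly the equality $P+Q=V$ that makes (a)--(c) isomorphisms rather than mere maps.

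Assembling, I would feed (b) into the tautological sequence $0\to TE\to TC|_{E}\to TC|_{E}/TE\to0$ together with the canonical density isomorphism for short exact sequences (equivalently, Lemma \ref{prop:restriction} applied to $E\subseteq C$) to get $|TC|^{\alpha}|_{E}\cong|TE|^{\alpha}\otimes|ND|^{\alpha}|_{E}$, and likewise $|TD|^{\beta}|_{E}\cong|TE|^{\beta}\otimes|NC|^{\beta}|_{E}$. Tensoring the two restricted bundles over $C^{\infty}(E)$ and collecting factors produces $|TE|^{\alpha+\beta}\otimes|NC|^{\alpha+\beta-1}|_{E}\otimes|ND|^{\alpha+\beta-1}|_{E}$, and (a) converts the last two tensor factors into $|NE|^{\alpha+\beta-1}$; this is precisely the bundle whose smooth sections form $\cH_{X,C\cap D}^{\alpha+\beta}$. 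Since no choices entered the construction, the resulting bilinear map is canonical and descends to $\cH_{X,C}^{\alpha}\otimes\cH_{X,D}^{\beta}$; in the case $\alpha=\beta=1/2$ the target is $\cH_{X,C\cap D}^{1}=|\Omega|^{1}(C\cap D)$, a genuine $1$-density on the intersection, which is the pairing announced before the statement.

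I expect the only real subtlety to be the point just flagged — confirming that $P+Q=V$ is exactly what turns the quotient maps (a)--(c) into canonical isomorphisms, and tracking the conormal/normal dualities and the exponents $1-\alpha$ and $\alpha-1$ with the correct signs (the bookkeeping works only because the definition of $\cH$ uses the exponent $1-\alpha$ on the conormal factor). There is no analytic obstruction: unlike H\"ormander's general criterion, here the wave front sets are built into the conormal bundles $N^{*}C$ and $N^{*}D$, whose only common covector over a transverse intersection is the zero covector, so no cancellation issue can arise.
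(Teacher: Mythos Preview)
Your proof is correct and rests on the same two ingredients as the paper's---restriction to $E=C\cap D$ followed by density bookkeeping using the short-exact-sequence isomorphism and the transversality splitting $NE\cong NC|_{E}\oplus ND|_{E}$. The one genuine difference is the intermediate decomposition: the paper sends $|TC|^{\alpha}$ and $|TD|^{\beta}$ \emph{up} to the ambient bundle via $|TC|^{\alpha}\simeq|TX|^{\alpha}\otimes|N^{*}C|^{\alpha}$, collects everything into $|TX|^{\alpha+\beta}\otimes|N^{*}C|^{1}\otimes|N^{*}D|^{1}$ (so the conormal factors land at the neutral exponent~$1$), applies transversality once, and then descends to $E$; you instead go directly \emph{down} to $|TE|$ using your cross-normal identifications (b) and (c)---that the normal bundle of $E$ in $C$ is $ND|_{E}$ and symmetrically---which are additional consequences of transversality the paper never invokes. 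Your route avoids the detour through $TX$ and makes the role of $P+Q=V$ more explicit; the paper's route is slightly more economical in that it only uses the single transversality fact $N(C\cap D)\cong NC\oplus ND$. The resulting isomorphism is the same.
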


\begin{proof}
The proof rests on two simple properties. The first one is always
valid, and it was already implicitly used in the proof of Lemma \ref{prop:restriction}.
We state it explicitly now at the level of the density bundles:
\begin{equation}
|TX_{|U}|^{\delta}\simeq|TU|^{\delta}\otimes|NU|^{\delta},\label{eq:decomposition}
\end{equation}
for any smooth submanifold $U$ of smooth manifold $X$. The second
property is valid only when the intersection $C\cap D$ of the two
smooth submanifolds $C$ and $D$ of $X$ is smooth and transverse;
in this case, we have that
\begin{equation}
|N(C\cap D)|^{\delta}\simeq|NC|^{\delta}\otimes|NC|^{\delta}.\label{eq:transversality}
\end{equation}
Now consider $\theta_{1}\in\cH_{X,C}^{\alpha}$ and $\theta_{2}\in\cH_{X,D}^{\beta}$.
The tensor product $\theta_{1}\otimes\theta_{2}$ of the restriction
to the intersection $C\cap D$ is a section of the following tensor
products of density bundles
\begin{equation}
|TC|^{\alpha}\otimes|TD|^{\beta}\otimes|N^{*}C|^{1-\alpha}\otimes|N^{*}D|^{1-\beta}.\label{eq:productspace}
\end{equation}
Using \eqref{eq:decomposition}, we can decompose the first two factors
as
\[
|TC|^{\alpha}\simeq|TX|^{\alpha}\otimes|N^{*}C|^{\alpha}\quad\textrm{and}\quad|TD|^{\beta}\simeq|TX|^{\beta}\otimes|N^{*}D|^{\beta}.
\]
Now substituting the new expression of these factors into \eqref{eq:productspace},
grouping the terms together, and using the usual canonical isomorphisms
of density bundles, we obtain that the product space \eqref{eq:productspace}
identifies with
\[
|TX|^{\alpha+\beta}\otimes|N^{*}C|^{1}\otimes|N^{*}D|^{1}.
\]
Since we are assuming that $C$ and $D$ are transverse, we can then
use the identification \eqref{eq:transversality} for the two last
factors above yielding
\[
|TX|^{\alpha+\beta}\otimes|N^{*}(C\cap D)|^{1}.
\]
Now, using again the decomposition \eqref{eq:decomposition}, we obtain
that \eqref{eq:productspace} can further be identified with
\[
|T(C\cap D)|^{\alpha+\beta}\otimes|N^{*}(C\cap D)|^{1-(\alpha+\beta)}.
\]
Thus, the tensor product $\theta_{1}\otimes\theta_{2}$ of the restrictions
coincide with a section of this last bundle, that is, with an element
of $\cH_{X,C\cap D}^{\alpha+\beta}$. 
\end{proof}

\section{Pairing of geometric distributions}

The space of all compactly supported smooth $\alpha$-densities does not form an inner-product space (unless $\alpha = \frac{1}{2}$) but there is always a pairing 
$$\langle \cdot\, , \cdot \rangle\,: \cD_X^\alpha \, \times \, \cD_X^{1-\alpha}\longrightarrow \C,$$
where the pairing between a (compactly supported) $\alpha$-density and a (compactly supported) $(1-\alpha)$-density comes from integrating the 1-density obtained by taking their  product. The global structure on $D_X = \oplus_{\alpha}D_X^\alpha$, on which the inner product is only partially defined, is reminiscent of that of a \textbf{Partial Inner-Product Space} introduced in \cite{antoine1976} (see also \cite{antoine2009}).

The main theorem in the previous section tells us that this structure can be extended to the space of geometric distributions. Namely, if we restrict the geometric state product to pairs of geometric subspaces with density degree in the same relation as above and with cores intersecting transversally, we obtain the following specialization:

\[
\cH_{X ,\, C}^\alpha \,\otimes \,\cH_{X,\, D}^{1-\alpha} \,\rightarrow\, 
\cH_{X, \, C\cap D}^1 \,\simeq \, |\Omega |^1(C\cap D).
\]
Now, integrating this last $1$-density, we obtain a pairing that generalizes the Partial Inner-Product space structure to the space of geometric distributional $\alpha$-densities. Let us summarize this in a proposition:

\begin{prop}
Consider the geometric distribution spaces $\cH^\alpha_{X,\, C}$ and $\cH^\beta_{X, \, D}$. If we have further that $\alpha + \beta = 1$ and that $C$ and $D$ intersect transversally along a compact submanifold, there is a natural bilinear pairing between these spaces: 
\[
\langle \cdot\, , \cdot \rangle\,:
\cH_{X ,\, C}^\alpha \,\times \,\cH_{X,\, D}^{1-\alpha}
\longrightarrow \C.
\]
In particular, these pairings endow the geometric distributional half-densities ($\alpha=1/2$) with a partially defined inner-product, extending the natural inner-product on the space of smooth compactly supported half-densities. 
\end{prop}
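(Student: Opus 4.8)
The plan is to read this Proposition as an immediate corollary of the Theorem of the previous section, the only genuinely new ingredients being the hypothesis $\alpha+\beta=1$ and the compactness of the core intersection. First I would specialize the Theorem to $\beta=1-\alpha$, obtaining the canonical map
\[
\cH_{X,C}^{\alpha}\otimes\cH_{X,D}^{1-\alpha}\longrightarrow\cH_{X,C\cap D}^{1}.
\]
Then I would invoke the identification $\cH_{X,C\cap D}^{1}\simeq|\Omega|^{1}(C\cap D)$ recorded in the second Example (when $\alpha=1$ the conormal factor $|N^{*}(C\cap D)|^{1-\alpha}$ is the trivial line bundle), so that the image of a pair $(\theta_{1},\theta_{2})$ is an honest smooth $1$-density on $C\cap D$. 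Since $C\cap D$ is assumed compact, this $1$-density is automatically compactly supported, hence integrable over $C\cap D$, and I would set
\[
\langle\theta_{1},\theta_{2}\rangle:=\int_{C\cap D}\theta_{1}\otimes\theta_{2}.
\]
Bilinearity is immediate from the bilinearity of the tensor product and the linearity of the integral, so this construction indeed yields a bilinear pairing $\cH_{X,C}^{\alpha}\times\cH_{X,D}^{1-\alpha}\to\C$.

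For the second assertion ($\alpha=1/2$), I would specialize further to the extreme case $C=D=X$ of the first Example, where $\cH_{X,X}^{1/2}$ is the space of smooth half-densities on $X$ and the intersection $C\cap D=X$ is (trivially) transverse. Here one must check that, on tracing the chain of canonical isomorphisms used in the proof of the Theorem, the product map reduces to the usual tensor product $|\Omega|^{1/2}(X)\otimes|\Omega|^{1/2}(X)\to|\Omega|^{1}(X)$; this is essentially automatic because all the normal-bundle factors $|N^{*}X|^{\bullet}$ are trivial, so every isomorphism in that chain is the identity. Consequently the pairing above becomes $(\mu,\nu)\mapsto\int_{X}\mu\otimes\nu$, which is precisely the canonical bilinear pairing underlying the natural inner product on half-densities; for noncompact $X$ one recovers the pairing on the smooth \emph{compactly supported} half-densities by the same computation, the support condition playing the role of the compactness of $C\cap D$. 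Since this $L^{2}$-type inner product is in fact sesquilinear, I would add a short remark that one either precomposes one slot with complex conjugation or, following \cite{BW1997}, regards ``inner product'' here as a name for the bilinear pairing.

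The computations are routine; the only point I expect to require attention is exactly this last consistency check, namely that the abstract identification $\cH_{X,C\cap D}^{1}\simeq|\Omega|^{1}(C\cap D)$ produced by the Theorem restricts, in the case $C=D=X$, to the identity on $|\Omega|^{1}(X)$, so that the new pairing genuinely extends the classical one rather than a twisted variant of it. Beyond tracking the canonical isomorphisms carefully I do not anticipate any obstacle.
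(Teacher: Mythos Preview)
Your proposal is correct and follows exactly the argument the paper gives: specialize the Theorem to $\beta=1-\alpha$, identify $\cH_{X,C\cap D}^{1}$ with $|\Omega|^{1}(C\cap D)$, and integrate over the compact intersection. If anything you are more careful than the paper, which does not spell out the consistency check that for $C=D=X$ the pairing reduces to the standard one on smooth half-densities.
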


If we regard the smooth half-density space $\cD_X^{1/2}$ on $X$ as modeling the states of a quantum system, then the collection of geometric distribution spaces
$$\cH_X^{1/2}:=\oplus_{C}\cH_{X,C}^{\frac 12}$$ extends this model. The partially-defined inner product should be then thought as giving the probability transition from one geometric state to another, under the compatibility condition that the cores of these states are intersecting transversally. This implies that a geometric state spatially supported on $C$ can transition to a geometric state spatially supported on $D$ only  when these states have some amount of  spatial overlap.

\end{document}